\newcommand{\zc}{{\mathcal Z}}
\newcommand{\z}{{\zeta_{\mathcal{Z}}}}
\theoremstyle{plain}
\numberwithin{equation}{section}
\newtheorem{thm}{Theorem}[section]
\newtheorem{theorem}[thm]{Theorem}
\newtheorem{lemma}[thm]{Lemma}
\newtheorem{remark}[thm]{Remark}
\newtheorem{corollary}[thm]{Corollary}
\newtheorem{example}[thm]{Example}
\newtheorem{definition}[thm]{Definition}
\begin{document}


%
%

\title{Structural Properties of Multiple Zeta Values
}

\author{Tanay Wakhare}

\address{Department of Electrical Engineering and Computer Science, MIT,\\
Cambridge, MA 02139, USA\\
}
\email{twakhare@mit.edu}

\author{Christophe Vignat}
\address{LSS CentraleSupelec, Univerit\'{e} Paris Saclay,\\
Gif sur Yvette, 91192, France\\
christophe.vignat@u-psud.fr 
Department of Mathematics, Tulane University,\\
New Orleans, LA 70118, USA\\
}
\email{cvignat@tulane.edu}
\maketitle


\begin{abstract}
We study some classical identities for multiple zeta values and show that they still hold for zeta functions built from an arbitrary sequence of nonzero complex numbers. We introduce the \textit{complementary zeta function} of a system, which naturally occurs when lifting identities for multiple zeta values to identities for quasisymmetric functions.
\end{abstract}



\section{Introduction and Notation}

Multiple zeta values (MZVs) are simply defined, yet ubiquitous in modern number theory and physics. The MZV of depth $k$ is defined as
\begin{equation}\label{1.1}
\zeta({s_1,\ldots,s_k}):= \sum_{n_1>n_2>\cdots>n_k\ge 1} \frac{1}{n_1^{s_1}n_2^{s_2}\cdots n_k^{s_k}} ,
\end{equation}
and in general exists for positive integral $s_i$ with $s_1\geq 2$ and $s_i\geq 1$ for $2\leq i \leq k$. Throughout this paper, we will refer to \eqref{1.1} as the \textit{Riemann multiple zeta function}, as an analogy to the Riemann zeta function. MZVs were first explored by Euler, but have only been systematically studied for the past few decades, for example by Zagier \cite{Zagier} and Hoffman \cite{Hoffman}. There are many nontrivial linear dependence relations between MZVs, and characterizing all such relations is an extremely difficult (and still unsolved) question.


%
Our main innovation is to consider the following object, which reduces to the Riemann multiple zeta function in the case $z_n = n$.
\begin{definition}
Given a (possibly finite) set of complex numbers $\zc := \{z_{n}\neq 0\}$, ordered in increasing order of magnitude, we define the \textbf{extended zeta function}
\[
\z(s):=\sum_{n=1}^{\infty}\frac{1}{z_{n}^{s}},
\]
and \textbf{extended multiple zeta function}
\[
\z(s_{1},\ldots,s_{k}):=\sum_{n_{1}>n_{2}>\cdots>n_{r}\geq1}\frac{1}{z_{n_{1}}^{s_{1}}\cdots z_{n_{k}}^{s_{k}}}.
\]
\end{definition}

The numbers $z_k$ may be chosen, for example, as the zeros of a given function $\mathcal{Z}\left( z \right)$, but this assumption is not necessary in the rest of the paper. Note that the choice of the zeros of $\mathcal{Z}\left( z \right) = \frac{1}{\Gamma(1-z)}$ yields the usual Riemann zeta and multiple zeta values, as this is an entire function with zeros at $\{1, 2, 3, \ldots\}$. Another particularly nice special case is $z_k=2k+1$, where we form a multiple sum over only odd integers. These ``multiple $t$-values" \cite{HoffmanOdd} are higher order analogs of the Dedekind eta function and behave quite differently from Riemann MZVs. Our unified treatment of both is new, and one of the primary reasons for studying these extended MZVs.

These extended MZVs are very relevant in physics. For example, let $\{ z_n \}$ denote the sequence of energy eigenvalues of a system, which may not have compact closed forms. However, the associated zeta function may still have a nice closed form in terms of well-known transcendental functions. This 
is the case of the ``quantum bouncer'' \cite{Crandall}, where the energy states of the system are zeros of Airy functions without explicit expression, whereas the associated zeta function at positive integers has values in terms of known transcendental constants.

There exist many identities satisfied by MZVs, due to their high degree of symmetry. Our prototype is the simple and elegant Euler identity
\begin{equation}
\zeta\left(2,1\right)=\zeta\left(3\right).
\end{equation}

The question we address in this paper is: \textit{what identities are structural, meaning that they still hold when the Riemann MZV is replaced by the extended MZV?}

This discussion includes the case where the sequence $\{z_{n}\}$ is finite, so that we also consider finite-type sums. In Corollary \ref{eulercor} we will see the structural $\z(2,1) = \tilde{\zeta}_\zc(3)$, where the \textbf{complementary zeta function} $\tilde{\zeta}_\zc(s)$ will be defined in Section \ref{compsec}. A prototypical example of a structural identity for multiple zeta values would be the reflection formula introduced by Euler,
\begin{equation}
\label{reflection_zeta}
\zeta\left( s,t \right) + \zeta\left( t,s \right) 
+\zeta\left( s+t \right)
=
\zeta\left( s \right)\zeta\left( t \right)
\end{equation}
which naturally lifts to 
\begin{equation}
\label{reflection}
\z\left( s,t \right) + \z\left( t,s \right) 
+\z\left( s+t \right)
=
\z\left( s \right)\z\left( t \right)
\end{equation}
after rewriting the domain of summation. Another more elaborate structural relation concerns star-MZVs, which are defined by
\[
\zeta^{\star}\left( s_1,\dots,s_k \right) =
\sum_{n_1\ge n_2\ge \dots \ge n_k \ge 1}
\frac{1}{n_{1}^{s_1}n_{2}^{s_2}\dots n_{k}^{s_k}}
\]
and can be expressed as
\[
\zeta^{\star}\left( s_1,\dots,s_k \right) =
\sum \zeta\left( s_1 \square \dots \square s_k \right),
\]
where the sum is over the $2^{k-1}$ configurations obtained by choosing $\square = ``+"$ or $\square= ``,"$. This identity also naturally lifts to the extended $\z$ and $\z^{\star}$ functions, since it only expresses a symmetry of the domain of summation.

We can recast this question in a more algebraic framework. We have skipped many technical details in the following construction, and the reader is referred to \cite[Section 4]{Henderson} for a complete exposition. Begin with a (possible finite) set of indeterminates $\{z_n\}$, and let $\lambda = (\lambda_1, \lambda_2,\cdots)$ be a composition of $n$. Then we can define the monomial symmetric function $m_\lambda:=\sum_{\sigma \in S_\lambda} z_{\sigma(1)}^{\lambda_1} z_{\sigma(2)}^{\lambda_2} \cdots$, where the sum is over the set of permutations giving distinct terms in the sum (so that the coefficient of any monomial in the sum is simply $1$). We then denote by $QSym$ the graded $\mathbb{Q}$-algebra of \textit{quasisymmetric functions}, which is spanned by the monomial symmetric functions. Under the evaluation homomorphism $z_k \mapsto k$ we have $m_\lambda \mapsto \zeta(\lambda_1,\lambda_2,\ldots)$. The extended MZV naturally lives in the algebra of quasisymmetric functions, $QSym$.


Our key observation is that many of the classical theorems about MZVs are only true after applying this evaluation map. We wish to lift these to theorems in $QSym$, so that by applying other evaluation maps such as $z_k \mapsto 2k+1$ we can effortlessly obtain results about other zeta functions.

Our overarching philosophy is that asymmetry in the indices of MZVs makes them hard to study, so we must first symmetrize them somehow. Results for such symmetrized MZVs should then hold \textit{for all extended multiple zeta values}. This means that it is not important that multiple zeta functions are sums over natural numbers; instead, the important factor is that depth $k$ MZVs are sums over a certain simplex in $\mathbb{Z}^k$.

As a natural result of our study, we introduce the \textbf{complementary zeta function} $\tilde{\zeta}_{\zc}$, and stress both its importance and simplicity. Given a sequence of $N$ complex numbers $\zc := \{z_k\}$, where we allow $N \to \infty$, the complementary zeta function associated to this system is a zeta function built from the sequence of numbers  $\{\tilde{z}_{k}\}$ defined as
$$\frac{1}{\tilde{z}_{k}}=\sum_{i=1}^{k-1}\frac{1}{z_{i}-z_{k}}+\sum_{i=k+1}^{N}\left(\frac{1}{z_{i}-z_{k}}-\frac{1}{z_{i}}\right).$$
In the case $z_k=k, N \to \infty,$ we also have $\tilde{z}_k=k$ and the complementary zeta function reduces to the Riemann case. In certain cases, such as $z_k=k^2$, we can write $\tilde{\zeta}_{\zc}$ as a nonlinear combination of values of Riemann MZVs. We are not sure whether our theorems give nonlinear dependence relations which can be deduced from known shuffle and stuffle relations for MZVs. Systematically studying certain special cases of $\z$ may give new nontrivial linear dependence relations amongst MZVs of a given depth.

In Section \ref{compsec} we introduce the complementary zeta function, and in Section \ref{Section2} we show that the complementary zeta function naturally occurs when generalizing six identities from Riemann MZVs to extended MZVs. In Section \ref{casessec} we explore four natural specializations of the complementary zeta function. Finally, in Section \ref{section:Bessel} we study the Bessel zeta function, built from the finite sequence of zeros of the Bessel function. This work is the first in a natural program to extend identities for multiple zeta functions to identities in $QSym$. 


\section{Complementary zeta function}\label{compsec}
We will now carefully state and examine the definition of a complementary zeta function and its multiple analog, as well as several special cases. For any integer $N>1$ and $\zc = \left\{ z_{1},\dots,z_{N}\right\} $ a set of
nonzero complex numbers, define the \textbf{complementary sequence} $\left\{ \tilde{z}_{k}\right\} _{1\le k\le N}$
as 
\footnote{ it is assumed that in \eqref{eq:1/ztilde 1} a sum $\sum_{N}^{M}$ is equal to $0$ when $M<N.$ 
} 
\begin{align}
\frac{1}{\tilde{z}_{k}}&:=\sum_{i=1}^{k-1}\frac{1}{z_{i}-z_{k}}+\sum_{i=k+1}^{N} \left(\frac{1}{z_{i}-z_{k}}-\frac{1}{z_{i}} \right)
\label{eq:1/ztilde 1} \\
&=\sum_{i=1}^{k-1}\frac{1}{z_{i}-z_{k}}+\sum_{i=k+1}^{N} \frac{z_k}{z_i(z_i-z_k)} \nonumber.
\end{align}
Then the \textbf{complementary zeta function} is defined as the series  
\[
\tilde{\zeta}_{\zc}\left(s\right):=\sum_{n=1}^\infty\frac{1}{\tilde{z}_{n}z_{n}^{s-1}}.
\]
When $z_k=k$ with $N\to \infty$, we have $\tilde{z}_k=k$ since
\begin{align*}
\frac{1}{\tilde{z}_{k}}&=\sum_{i=1}^{k-1}\frac{1}{{i}-{k}}+\sum_{i=k+1}^{\infty}\left(\frac{1}{{i}-{k}}-\frac{1}{{i}}\right)= -\sum_{i=1}^{k-1}\frac{1}{{i}} + \sum_{i=1}^{k}\frac{1}{{i}}= \frac{1}{k},
\end{align*}
and hence $\z(s) = \zeta(s)$. The complementary zeta function is therefore seen as a nonstandard, but very natural generalization of the Riemann zeta function.  The surprise in the definition is that the complementary zeta function naturally arises when trying to generalize many outwardly different identities for Riemann MZVs to extended MZVs. For example, in Section \ref{sec3.1} the complementary zeta function naturally arises when considering the residues of rational functions. Meanwhile, in Section \ref{Section4} we treat a zeta sum formula due to Hirose-Murahara-Onozuka where the complementary zeta arises from elementary rational telescoping, a completely different technique. 

Due to the fact that we can state results relating only the extended and complementary zeta function (such as the prototypical Euler identity of Corollary \ref{eulercor}), after evaluating the complementary zeta function we obtain many identities ``for free," such as for the multiple $t$-values ($z_k = 2k+1$). However, finding closed forms for the complementary sequence $\tilde{z}_k$ and the complementary zeta function $\tilde{\zeta}_{\zc}$ is a formidable problem. 
Some special cases are given in Section \ref{casessec}.

We now introduce a multiple analog of the complementary zeta function; whether this is the most natural generalization remains to be seen. For all $N>1$ and 
$\{ z_n \}$ a sequence of 
nonzero complex numbers, define the \textbf{higher order complementary sequence} $\left\{ \tilde{z}_{n}^{\left(r\right)}\right\} $
as 
\begin{align}
\frac{1}{\tilde{z}_{n}^{\left(r\right)}} & :=\sum_{n>n_{2}>n_{3}>\dots>n_{r}}\frac{1}{\left(z_{n_{2}}-z_{n}\right)\dots\left(z_{n_{r}}-z_{n}\right)}
\label{eq:1/ztilde 1-1}\\
 &\qquad +\sum_{n_{1}>n>n_{3}>\dots>n_{r}}\frac{z_{n}}{z_{n_{1}}\left(z_{n_{1}}-z_{n}\right)\left(z_{n_{3}}-z_{n}\right)\dots\left(z_{n_{r}}-z_{n}\right)}\nonumber \\
 & \qquad+\dots\nonumber \\
 & \qquad+\sum_{n_{1}>n_{2}>\dots>n_{r-1}>n}\frac{z_{n}}{z_{n_{1}}\left(z_{n_{1}}-z_{n}\right)\left(z_{n_{2}}-z_{n}\right)\dots\left(z_{n_{r-1}}-z_{n}\right)}\nonumber \\
&=  \sum_{n>n_{2}>n_{3}>\dots>n_{r}}\frac{1}{\left(z_{n_{2}}-z_{n}\right)\dots\left(z_{n_{r}}-z_{n}\right)} \nonumber \\
&\qquad+ \sum_{ i=2 }^r   \sum_{n_1 > \cdots > n_{i-1}> n>n_{i+1}>\cdots > n_r } \frac{z_n}{z_{n_1}  \prod_{\substack{j=1 \\ j \neq i}}^r (z_{n_j}-z_n)} \nonumber.
\end{align}
When $z_k = k$, another telescoping argument gives $\tilde{z}_n^{(r)} = n^{r-1}$. The \textbf{higher order complementary zeta function} is then defined as
\begin{equation}
\tilde{\zeta}_{\mathcal{Z}}^{\left( r \right)}\left( k \right) := \sum_{n= 1}^\infty \frac{1}{\tilde{z}_{n}^{\left( r \right)}z_{n}^{k-1}}.\label{comp2}
\end{equation}

The discussion about lifting MZV identities to identities in $QSym$ from the introduction suggests an important open question: express $\tilde{\zeta}_{\mathcal{Z}}$ and $\tilde{\zeta}_{\mathcal{Z}}^{\left( r \right)}$ as quasisymmetric functions by expanding them in terms of a suitable basis of $QSym$, such as the elementary symmetric functions. Do these correspond to known symmetric functions?

\section{Structural Identities}\label{Section2}
In this section, we lift six identities from MZVs to extended MZVs. Note that the {complementary zeta function} and complementary sequence $\left\{ \tilde{z}_{k}\right\}$ appear in all six generalizations. The definition of $\left\{ \tilde{z}_{k}\right\}$ is exactly appropriate to allow many rational telescoping arguments to proceed; in essence, we reduce difficult telescoping arguments involving extended MZVs to the evaluation of the complementary zeta function.

\subsection{A somewhat unlikely looking identity}\label{sec3.1}
In their legendary paper \cite{BorweinGoldbach}, Borwein and Bradley provide 32 different proofs of  
Euler's identity
\begin{equation}
\zeta\left(2,1\right)=\zeta\left(3\right).
\label{Euler identity}
\end{equation}
One of their proofs involves the ``somewhat unlikely
looking'' identity 
\begin{equation}
\sum_{n\ge1}\frac{1}{n\left(n+x\right)}\sum_{m=1}^{n-1}\frac{1}{m+x}=\sum_{n\ge1}\frac{1}{n^{2}\left(n+x\right)},
\label{eq:main}
\end{equation}
that holds for any non negative integer $x,$ and is obtained in \cite {Borwein} by the elementary manipulation
of rational fractions.
The interesting feature of this identity is that choosing $x=0$ provides
Euler's identity \eqref{Euler identity}.
As we will see, this identity is at the root of many structural identities for MZVs.
We first give an extended structural version of this result.
\begin{theorem}
\label{thm1}
Consider a sequence $\{z_k\}_{1 \le k \le N}$ of non-zero complex numbers. Then, with $\tilde{z}_{n}$ defined by \eqref{eq:1/ztilde 1}, we have
\begin{equation}
\sum_{n=1}^{N}\frac{1}{z_{n}\left(z_{n}+x\right)}\sum_{m=1}^{n-1}\frac{1}{z_{m}+x}=\sum_{n=1}^{N}\frac{1}{z_{n}\tilde{z}_{n}}\frac{1}{z_{n}+x}.\label{eq:N finite}
\end{equation}
\end{theorem}

\begin{corollary}\label{eulercor}
The special case $x=0$ and $N\to \infty$ in \eqref{eq:N finite} gives,
with
\[
\tilde{\zeta}_{\zc}\left(3\right)=\sum_{n=1}^\infty\frac{1}{z_{n}^{2}\tilde{z}_{n}},
\]
the following generalization of Euler's identity:
\begin{equation}
\z\left(2,1\right)=\tilde{\zeta}_{\zc}\left(3\right).
\label{Euler generalized}
\end{equation}
\end{corollary}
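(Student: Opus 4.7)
The plan is a direct specialization of Theorem \ref{thm1}. First I would set $x=0$ in the identity \eqref{eq:N finite}; the factors $(z_n+x)$ collapse to $z_n$ on both sides, yielding
\[
\sum_{n=1}^{N}\frac{1}{z_n^2}\sum_{m=1}^{n-1}\frac{1}{z_m} \;=\; \sum_{n=1}^{N}\frac{1}{z_n^2\,\tilde{z}_n}.
\]
Reindexing $(n,m)=(n_1,n_2)$ and comparing against the definition of the extended MZV, the left-hand side is precisely the partial sum $\sum_{N\ge n_1>n_2\ge 1} z_{n_1}^{-2}z_{n_2}^{-1}$, i.e., the truncation of $\z(2,1)$. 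The right-hand side matches the partial sum for $\tilde{\zeta}_\zc(3)$ upon substituting $s=3$ in the definition $\tilde{\zeta}_\zc(s)=\sum_{n\ge 1} z_n^{-(s-1)}\tilde{z}_n^{-1}$.

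It then suffices to pass to the limit $N\to\infty$. Under the standing assumption that $\{z_n\}$ grows fast enough so that both $\z(2,1)$ and $\tilde{\zeta}_\zc(3)$ converge absolutely, term-by-term convergence of the two partial sums to their infinite counterparts is immediate, and we obtain $\z(2,1)=\tilde{\zeta}_\zc(3)$.

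The only subtlety, rather than a genuine obstacle, is that the finite-$N$ definition \eqref{eq:1/ztilde 1} of $\tilde{z}_n$ has an explicit $N$-dependence, so one must check that $\tilde{z}_n$ is well-defined in the limit (i.e., that the tail series in \eqref{eq:1/ztilde 1} converges). This is already an implicit requirement for $\tilde{\zeta}_\zc$ to make sense. The sanity check $z_k=k$, for which $\tilde{z}_k=k$ as computed in Section \ref{compsec}, correctly recovers the classical Euler identity $\zeta(2,1)=\zeta(3)$, confirming the strategy.
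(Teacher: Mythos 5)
Your proposal is correct and follows exactly the route the paper intends: the corollary is stated as a direct specialization of Theorem \ref{thm1} at $x=0$ with $N\to\infty$, the left side being the truncated $\z(2,1)$ and the right side being $\tilde{\zeta}_{\zc}(3)$ by definition. Your added remark about the $N$-dependence of $\tilde{z}_n$ in \eqref{eq:1/ztilde 1} and the need for the tail series to converge is a point the paper itself glosses over, but it does not change the argument.
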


\begin{proof}[Proof of Theorem \ref{thm1}]
First consider the case when $N$ is finite. Both sides of (\ref{eq:N finite})
are rational functions; let us show that they have the same poles
and same residue at each pole. Clearly, the left-hand side has only
simple poles $-z_{k}$; for each pole $-z_{k},$ the residue $\alpha_{k}$
is computed as
\[
\alpha_{k}=\lim_{x\to-z_{k}}\left(x+z_{k}\right)\left(\sum_{n=1}^{N}\frac{1}{z_{n}\left(z_{n}+x\right)}\sum_{m=1}^{n-1}\frac{1}{z_{m}+x}\right).
\]
Assuming first $k>1,$ we have
\begin{align*}
\alpha_k&=\lim_{x\to-z_{k}}\left(x+z_{k}\right)\left(\sum_{n=1}^{k-1}\frac{1}{z_{n}\left(z_{n}+x\right)}\sum_{m=1}^{n-1}\frac{1}{z_{m}+x}\right) + \lim_{x\to-z_{k}} \frac{1}{z_k} \sum_{m=1}^{k-1} \frac{1}{z_m+x}  \\
&\qquad+\lim_{x\to-z_{k}}\left(x+z_{k}\right)\left(\sum_{n=k+1}^{N}\frac{1}{z_{n}\left(z_{n}+x\right)}\sum_{m=1}^{n-1}\frac{1}{z_{m}+x}\right)\\
 & =0+\frac{1}{z_{k}}\left\{ \frac{1}{z_{1}-z_{k}}+\dots+\frac{1}{z_{k-1}-z_{k}}\right\} +\frac{1}{z_{k+1}\left(z_{k+1}-z_{k}\right)}+\dots+\frac{1}{z_{N}\left(z_{N}-z_{k}\right)} \\
&= \frac{1}{z_k \tilde{z}_k}
\end{align*}
after comparing to the definition of $\frac{1}{\tilde{z}_k}$ in (\ref{eq:1/ztilde 1}). The computation
of the boundary residues $\alpha_{1}$ and $\alpha_{N}$ is equally
simple. 
\end{proof}
In \cite{Borwein}, the authors show that computing the $s-$th Taylor coefficient
in (\ref{eq:main}) yields a corresponding  formula for the Riemann zeta value $\zeta(s)$:
\begin{equation}
\label{sum rule}
\zeta\left(s+3\right)=\sum_{\substack{a+b=s \\ a,b\ge0}}\zeta\left(2+a,1+b\right).
\end{equation}
For example, with $s=1$, this yields
\[
\zeta\left(4\right)=\zeta\left(3,1\right)+\zeta\left(2,2\right).
\]
The same approach, i.e. taking $N\to \infty$ and computing the $s-$th Taylor coefficient in (\ref{eq:N finite}),
provides in our case the following extension of the  formula \eqref{sum rule}.
\begin{theorem}
For
any sequence $\left\{ z_{k}\right\} _{k\ge1}$ of nonzero complex
numbers such that the following sums are convergent,
\begin{equation}
\label{zetak+3}
\tilde{\zeta}_{\zc}\left(s+3\right)
=
\sum_{\substack{a+b=s \\ a,b\ge0}}\z\left(2+a,1+b\right)
.
\end{equation}
\end{theorem}

\begin{proof}
Since 
\[
\frac{d^{k}}{dx^{k}}\sum_{n>m\ge1}\frac{1}{z_{n}\left(z_{n}+x\right)}\frac{1}{z_{m}+x}=\frac{d^{k}}{dx^{k}}\sum_{n\ge1}\frac{1}{z_{n}\tilde{z}_{n}\left(z_{n}+x\right)},
\]
applying  Leibniz' formula
\[
\frac{d^{k}}{dx^{k}}\frac{1}{z_{n}+x}\frac{1}{z_{m}+x}=\sum_{j=0}^{k}\binom{k}{j}\left(\frac{d^{j}}{dx^{j}}\frac{1}{z_{n}+x} \right)\left(\frac{d^{k-j}}{dx^{k-j}}\frac{1}{z_{m}+x}\right)
\]
and
\[
\frac{d^{j}}{dx^{j}}\frac{1}{z_{n}+x}=\frac{\left(-1\right)^{j}j!}{\left(z_{n}+x\right)^{j+1}},
\]
we deduce
\[
\sum_{j=0}^{k}\sum_{n>m\ge1}\frac{1}{z_{n}\left(z_{n}+x\right)^{j+1}}\frac{1}{\left(z_{m}+x\right)^{k-j+1}}=\sum_{n\ge1}\frac{1}{z_{n}\tilde{z}_{n}\left(z_{n}+x\right)^{k+1}}.
\]
Evaluating this at $x=0$ yields
\[
\sum_{j=0}^{k}\sum_{n>m\ge1}\frac{1}{z_{n}^{j+2}}\frac{1}{z_{m}^{k-j+1}}=\sum_{n\ge1}\frac{1}{\tilde{z}_{n}z_{n}^{k+2}},
\]
which can be rewritten as
\[
\sum_{j=0}^{k}\z\left(j+2,k-j+1\right)=\tilde{\zeta}_{\zc}\left(k+3\right),
\]
which is the desired result.
\end{proof}

\subsection{Sum formula}
The sum formula for the ordinary MZVs is the following identity
\begin{equation}
\label{sum formula}
\sum_{\substack{\sum_i a_i=s \\ a_i\ge0}}
\zeta\left( a_1+2,a_2+1,\dots,a_r+1 \right)
=
\zeta\left( r+s+1 \right).
\end{equation}
As noticed in \cite{Borwein}, this formula can be derived by identifying the $s-$th Taylor coefficient in the generalization
\[
\sum_{k_1>k_2>\dots>k_r>0}\frac{1}{k_1}\prod_{j=1}^{r}\frac{1}{k_j-x} = \sum_{n=1}^{\infty} \frac{1}{n^r\left( n-x \right)}
\]
of \eqref{eq:main}.
An extension of our previous methods allows us to state a structural version of the sum formula. We first derive the general case of Theorem \ref{thm1} as follows.
\begin{theorem}
\label{thm general ztilde_r}
With $\tilde{z}_{n}^{\left(r\right)}$ defined in \eqref{eq:1/ztilde 1-1}, we have
\begin{equation}
\sum_{n_{1}>\dots>n_{r}\ge1}\frac{1}{z_{n_{1}}\left(z_{n_{1}}+x\right)\dots\left(z_{n_{r}}+x\right)}=\sum_{n\ge1}\frac{1}{z_{n}\tilde{z}_{n}^{\left(r\right)}}\frac{1}{z_{n}+x}.\label{eq:N finite-1}
\end{equation}
\end{theorem}

\begin{proof}
The right-hand side of (\ref{eq:N finite-1}) has poles at $x=-z_{n},\thinspace\thinspace$
with residues $\frac{1}{z_{n}\tilde{z}_{n}^{\left( r \right)}}$. 
The residue at $x=-z_n$ on the left-hand side can be computed as
\begin{align*}
&\lim_{x\to-z_{n}}\left( x+z_n \right)\sum_{n_{1}>\dots>n_{r}\ge1}\frac{1}{z_{n_{1}}\left(z_{n_{1}}+x\right)\dots\left(z_{n_{r}}+x\right)} \\
&=\sum_{n>n_{2}>n_{3}>\dots>n_{r}}\frac{1}{z_n\left(z_{n_{2}}-z_{n}\right)\dots\left(z_{n_{r}}-z_{n}\right)} \\
&\qquad+\sum_{n_{1}>n>n_{3}>\dots>n_{r}}\frac{z_{n}}{z_{n_1}\left(z_{n_{1}}-z_{n}\right)\left(z_{n_{3}}-z_{n}\right)\dots\left(z_{n_{r}}-z_{n}\right)}\\ 
&\qquad+\dots +\sum_{n_{1}>n_{2}>\dots>n_{r-1}>n}\frac{z_{n}}{z_{n_{1}}\left(z_{n_{1}}-z_{n}\right)\left(z_{n_{2}}-z_{n}\right)\dots\left(z_{n_{r-1}}-z_{n}\right)} \\
&=\frac{1}{z_{n}\tilde{z}_{n}^{\left( r \right)}},
\end{align*}
 after comparing with Definition (\ref{eq:1/ztilde 1-1}). We obtain $r$ multiple summations based on the case for which of the summation indices equals the fixed $n$.
\end{proof}

As a consequence of Theorem \ref{thm general ztilde_r}, we deduce the following generalization of the sum formula \eqref{sum formula}.
\begin{theorem}
With $\{\tilde{z}_{n}^{\left( r \right)}\}$ defined as in \eqref{eq:1/ztilde 1-1} and $\tilde{\zeta}_\zc$ defined by \eqref{comp2}, the extended multiple zeta function satisfies the sum rule
\[
\sum_{\substack{\sum s_i =s\\s_i\ge0}} \z\left( s_1+2,s_2+1,\dots,s_r+1 \right) = \tilde{\zeta}_{\zc}^{\left( r \right)}\left( r+s+1 \right).
\]
\end{theorem}
\begin{proof}
Compute the Taylor expansion of each side of \eqref{eq:N finite-1} and identify the coefficient of $x^{r+s+1}$ on both sides.
\end{proof}
\subsection{Euler's reduction formula}
In \cite{Borwein}, identity \eqref{sum rule} is referred to as an {\it inversion of Euler's reduction formula,} that reads 
\[
\zeta\left( s,1 \right) = \frac{s}{2}\zeta\left( s+1 \right)-\frac{1}{2}\sum_{k=1}^{s-2}\zeta\left( k+1 \right)\zeta\left( s-k \right),\,\,s>1 \in \mathbb{Z}.
\]
The next result provides an extension of this reduction formula to the extended MZVs, showing that it is of structural type.

\begin{theorem}
The  generalized reduction formula is
\[
\z\left( s,1 \right) = 
\tilde{\zeta}_\zc\left( s+1 \right)
+
\left( \frac{s}{2}-1 \right)\z\left( s+1 \right)
-\frac{1}{2}\sum_{k=1}^{s-2}\z\left( k+1 \right)\z\left( s-k \right).
\]
\end{theorem}
\begin{proof}
Start from \eqref{zetak+3} with $k+2$ replaced by $s$ and extract the last term in the sum
to obtain 
\[
\z\left(s,1\right)=
\tilde{\zeta}_\zc\left(s+1\right)
-\sum_{j=1}^{s-2}\z\left(j+1,s-j\right).
\]
Next apply the reflection formula \eqref{reflection} to each couple of terms $\z\left(j+1,k-j\right)$ and $\z\left(k-j,j+1\right)$ in the sum so that
\begin{align*}
\z\left(s,1\right)&=
\tilde{\zeta}_\zc\left(s+1\right)
-\frac{1}{2}\sum_{j=1}^{s-2}
\left[ \z\left( j+1 \right)\z\left( s-j \right) -\z\left( s+1 \right)\right]\\
&=\tilde{\zeta}_\zc\left(s+1\right)
+\frac{s-2}{2}\z\left( s+1 \right)
-\frac{1}{2}\sum_{j=1}^{s-2}
\z\left( j+1 \right)\z\left( s-j \right), 
\end{align*}
which is the desired result.
\end{proof}
\subsection{The sum formula by Hirose et al.}\label{Section4}
In \cite{Hirose}, Hirose, Murahara, and Onozuka propose a generalization of the depth $2$ case of the sum formula \eqref{sum formula} to arbitrary complex values of the parameter $s$ as follows:
\begin{theorem}
\cite[Theorem 1.2]{Hirose} for $\Re\left(s\right)>1,$
\begin{equation}
\sum_{n\ge0}\left(\zeta\left(n+2,s-n-2\right)-\zeta\left(s+n,-n\right)\right)=\zeta\left(s\right).\label{eq:Hirose}
\end{equation}
\end{theorem}

We prove a further generalization:
\begin{theorem}
We have, for complex $s$ such that the following quantities exist,
\begin{equation}
\sum_{n\ge0}\left(\z\left(n+2,s-n-2\right)-\z\left(s+n,-n\right)\right)=\tilde{\zeta}_{\zc}\left(s\right).\label{eq:Hirose generalized}
\end{equation}
\end{theorem}

\begin{proof}
The left-hand side is expanded as
\begin{align*}
\sum_{n\ge0}\sum_{0<n_{2}<n_{1}}\frac{1}{z_{n_{1}}^{n+2}z_{n_{2}}^{s-n-2}}-\frac{1}{z_{n_{1}}^{s+n}z_{n_{2}}^{-n}} & =\sum_{0<n_{2}<n_{1}}\sum_{n\ge0}\left(\frac{z_{n_{2}}}{z_{n_{1}}}\right)^{n}\left(\frac{1}{z_{n_{1}}^{2}z_{n_{2}}^{s-2}}-\frac{1}{z_{n_{1}}^{s}}\right)\\
 & =\sum_{0<n_{2}<n_{1}}\frac{1}{1-\frac{z_{n_{2}}}{z_{n_{1}}}}\left(\frac{1}{z_{n_{1}}^{2}z_{n_{2}}^{s-2}}-\frac{1}{z_{n_{1}}^{s}}\right)\\
 & =\sum_{0<n_{2}<n_{1}}\frac{1}{z_{n_{1}}^{2}-z_{n_{1}}z_{n_{2}}}\left(\frac{1}{z_{n_{2}}^{s-2}}-\frac{1}{z_{n_{1}}^{s-2}}\right)\\
 & =S_{1}-S_{2},
\end{align*}
with
\begin{align*}
S_{1} & =\sum_{0<n_{2}<n_{1}}\frac{1}{z_{n_{1}}^{2}-z_{n_{1}}z_{n_{2}}}\frac{1}{z_{n_{2}}^{s-2}}=\sum_{0<n_{2}}\frac{1}{z_{n_{2}}^{s-2}}\sum_{n_{1}>n_{2}}\frac{1}{z_{n_{1}}^{2}-z_{n_{1}}z_{n_{2}}}\\
 & =\sum_{0<n_{2}}\frac{1}{z_{n_{2}}^{s-2}}\sum_{n_{1}>n_{2}}\frac{1}{z_{n_{2}}}\left(\frac{1}{z_{n_{1}}-z_{n_{2}}}-\frac{1}{z_{n_{1}}}\right)\\
 & =\sum_{0<n_{2}}\frac{1}{z_{n_{2}}^{s-1}}\sum_{n_{1}>n_{2}}\left(\frac{1}{z_{n_{1}}-z_{n_{2}}}-\frac{1}{z_{n_{1}}}\right)
\end{align*}
and
\[
S_{2}=\sum_{0<n_{2}<n_{1}}\frac{1}{z_{n_{1}}^{2}-z_{n_{1}}z_{n_{2}}}\frac{1}{z_{n_{1}}^{s-2}}=\sum_{n_{1}>1}\frac{1}{z_{n_{1}}^{s-1}}\sum_{0<n_{2}<n_{1}}\frac{1}{z_{n_{1}}-z_{n_{2}}}.
\]
We deduce
\begin{align*}
&\sum_{n\ge0}\left[\z\left(n+2,s-n-2\right)-\z\left(s+n,-n\right)\right] \\
& =S_{1}-S_{2}\\
&=\sum_{0<n_{2}}\frac{1}{z_{n_{2}}^{s-1}}\sum_{n_{1}>n_{2}}\left(\frac{1}{z_{n_{1}}-z_{n_{2}}}-\frac{1}{z_{n_{1}}}\right) -\sum_{n_{1}>1}\frac{1}{z_{n_{1}}^{s-1}}\sum_{0<n_{2}<n_{1}}\frac{1}{z_{n_{1}}-z_{n_{2}}}.
\end{align*}

Reindexing, this is
\[
\sum_{k>0}\frac{1}{z_{k}^{s-1}}\left[\sum_{i>k}\left(\frac{1}{z_{i}-z_{k}}-\frac{1}{z_{i}}\right)+\sum_{0<i<k}\frac{1}{z_{i}-z_{k}}\right]=\sum_{k>0}\frac{1}{z_{k}^{s-1}\tilde{z}_{k}}.
\]
\end{proof}


\section{The Complementary Zeta Function: Rational Cases}\label{casessec}

In this section, we study several special cases of the complementary multiple zeta function, which correspond to some specific choices for the  sequence $\{z_{k}\}$.
In what follows, the \textbf{Hurwitz zeta function} is denoted as
\[
\zeta_{H}\left(s,z\right)=\sum_{n\ge0}\frac{1}{\left(z+n\right)^{s}},
\]
in order to avoid  the confusion with the MZV of depth 2,
\[
\zeta\left(a,b\right)=\sum_{n>m>0}\frac{1}{n^{a}m^{b}}.
\]
Furthermore, we will let $$\psi(z):= \frac{\Gamma'(z)}{\Gamma(z)} = -\gamma + \sum_{n=1}^\infty \left(\frac{1}{n} - \frac{1}{n+z-1}\right) $$ denote the \textit{digamma function}, where $\gamma \simeq 0.57722$ is the Euler-Mascheroni constant.
We will also frequently refer to the \textit{multiple $t$-values} of Hoffman \cite{Hoffman}, defined as
\[
t\left(s_{1},\dots,s_{k}\right)=\sum_{\substack{n_{1}>\dots>n_{k}\ge1 \\ n_{i}\thinspace\thinspace\text{odd\thinspace\thinspace}}}\frac{1}{n_{1}^{s_{1}}\dots n_{k}^{s_{k}}}
\]
with the special case
$t\left(s\right)=\left(1-2^{-s}\right)\zeta\left(s\right).$ Note that, up to a power of $2$, this is equivalent to the extended multiple zeta function with $z_k = 2k+1$.

We first collect four evaluations of the complementary zeta function, where $z_k$ is a simple linear or quadratic term, and defer their proofs to the end of this section.
\begin{theorem}
Consider $a \in [0,1]$ and let $z_k = k + a - 1$. Then
$$ \frac{1}{\tilde{z}_{k}} = \psi\left(k+a\right)-\psi\left(k\right),$$
and 
$$\tilde{\zeta}_{\zc}\left(s\right)= \zeta_{H}\left(s-1,a\right)\left[\psi\left(a+1\right)-\psi\left(1\right)\right]-a\sum_{l=0}^{\infty}\frac{\zeta_{H}\left(s-1,a+l+1\right)}{\left(a+l+1\right)\left(l+1\right)}.$$
\end{theorem}

\begin{theorem}
For $s \geq 2$ an integer and $z_k=k-\frac{1}{2}$, the complementary MZV $\tilde{\zeta}_{\zc}\left(s\right)$ is
\begin{align}
\tilde{\zeta}_{\zc}\left(s\right) & 
=2^{s}\left[t\left(s\right)+t\left(s-1,1\right)+\frac{\psi\left(\frac{1}{2}\right)}{2}t\left(s-1\right)\right]\nonumber \\
 & -\frac{\left(-1\right)^{s-1}}{\left(s-2\right)!}\left[\sum_{k=0}^{s-3}\binom{s-3}{k}\psi^{\left(k+1\right)}\left(\frac{1}{2}\right)\psi^{\left(s-k-3\right)}\left(\frac{1}{2}\right)-\frac{1}{2}\psi^{\left(s-1\right)}\left(\frac{1}{2}\right)\right].\label{eq:zetatildeG(s)}
\end{align}
At odd positive integers ($s\geq 1$ in what follows), we can also eliminate the multiple $t$-values and evaluate
\begin{align*}
\tilde{\zeta}_{\zc}\left(2s+1\right) & =-\gamma\zeta\left(2s\right)\left(2^{2s}-1\right)+\left(s+\frac{1}{2}\right)\zeta\left(2s+1\right)\left(2^{2s+1}-1\right)\\
 & -\sum_{l=1}^{s-1}\left(2^{2l}-1\right)\zeta\left(2l\right)\zeta\left(2s+1-2l\right)\\
 & +\frac{1}{2s-1}\left[\sum_{k=0}^{2s-2}\left(k+1\right)\left(2^{k+2}-1\right)\zeta\left(k+2\right)\left(2^{2s-k-1}-1\right)\zeta\left(2s-k-1\right)\right].
\end{align*}
\end{theorem}

\begin{theorem}
In the case $z_{k}=k^{2},$  we have
\[
\frac{1}{\tilde{z}_{k}}=\frac{3}{4k^{2}}-\psi'\left(k+1\right).
\]
As a consequence, the complementary zeta function is 
\[
\tilde{\zeta}_{\zc}\left(s\right)=\frac{7}{4}\zeta\left(2s\right)-\zeta\left(2\right)\zeta\left(2s-2\right)+\zeta\left(2s-2,2\right).
\]
\end{theorem}
\begin{theorem}
In the case $z_{k}=k\left(k+1\right),$ we have
\[
\frac{1}{\tilde{z}_{k}}=\frac{1}{k}-\frac{2}{k+1}+\frac{1}{\left(2k+1\right)^{2}},
\]
and the corresponding complementary MZV is a linear combination of values of the Riemann zeta function
\[
\tilde{\zeta}_{\zc}\left(s\right)=\left(-1\right)^{s}\left(\sum_{k=2}^{s}\mu_{k}^{\left(s\right)}\zeta\left(k\right)+\eta_{s}\right).
\]
Furthermore, we have
\[
\mu_{k}^{\left(s\right)}=\left(\left(-1\right)^{k}+2\right)\left\{ \binom{2s-2-k}{s-2}+\left(-1\right)^{k}\binom{2s-2-k}{s-1}\right\} -\left(1+\left(-1\right)^{k}\right)\beta_{k-1}^{\left(s-1\right)}
\]
with 
\[
\beta_{k}^{\left(s\right)}=\sum_{i=0}^{s-k-1}4^{i}\binom{2s-2i-k-2}{s-i-1}
\]
and
\[
\eta_{s}=\left(s-\frac{1}{2}\right)\binom{2s-2}{s-1}-\binom{2s-2}{s}-4\binom{2s-3}{s-2}-\left(\frac{\pi^{2}}{8}-\frac{1}{2}\right)2^{2s-2}.
\]
\end{theorem}

\begin{remark}
For $z_k = k (k+1)$, the first values are
\[
\tilde{\zeta}_{\zc}\left(2\right)=0,\thinspace\thinspace\tilde{\zeta}_{\zc}\left(3\right)=-7+\frac{5\pi^{2}}{6}-\zeta\left(3\right),\thinspace\thinspace\tilde{\zeta}_{\zc}\left(4\right)=47-\frac{16\pi^{2}}{3}+\frac{\pi^{4}}{30}+2\zeta\left(3\right).
\]
Note that the values of $\mu_k^{\left( s \right)}$ for odd $k$, i.e. the coefficients of $\zeta\left( 2k+1 \right)$, simplify to
\[
\mu_{2k+1}^{\left( s \right)} = \frac{2k}{s-1} \binom{2s-2k-1}{s-2}.
\]
Moreover, for arbitrary $k,$ the sequence of coefficients $\beta_k^{\left( s \right)}$ coincides with the OEIS  sequence A143019 \cite{OEIS}
read by antidiagonals: more precisely, denote as $a^{\left( q \right)}_{n}$ the $n-$th coefficient in the Taylor series expansion at $z=0$ of the function 
\[
\frac{1}{\left( 1-4z \right)^{\frac{3}{2}}}\left( \frac{1-\sqrt{1-4z}}{2z} \right)^q,
\]
then 
\[
\beta_{k}^{\left( s \right)} = a^{\left( k \right)}_{s-k-1}.
\]
As a consequence of the recurrence identity $a^{\left( q \right)}_{n} = a^{\left( q-1 \right)}_{n}  + a^{\left( q+1 \right)}_{n-1},$ the coefficients $\beta_{k}^{\left( s \right)}$ satisfy the three term recurrence
\[
\beta_{k}^{\left( s+k+1 \right)}-\beta_{k+1}^{\left( s+k+1 \right)} = \beta_{k-1}^{\left( s+k \right)}.
\]
\end{remark}

We now give proofs of all four cases.
\begin{proof}[Proof of Theorem 4.1]
The choice $z_{k}=k+a-1$ gives
\[
\frac{1}{\tilde{z}_{k}}=\sum_{i=1}^{k-1}\frac{1}{i-k}+\sum_{i=k+1}^{\infty}\frac{1}{i-k}-\frac{1}{i+a-1}=\psi\left(k+a\right)-\psi\left(k\right).
\]
Note that for $a=1$ we recover the Riemann case
\[
\frac{1}{\tilde{z}_{k}}=\frac{1}{k}.
\]
The corresponding zeta function is
\begin{equation}\label{linzeta}
\tilde{\zeta}_{\zc}\left(s\right)=\sum_{n=1}^\infty\frac{1}{\tilde{z}_{n}z_{n}^{s-1}}=\sum_{n\ge1}\frac{\psi\left(n+a\right)-\psi\left(n\right)}{\left(n+a-1\right)^{s-1}}
\end{equation}
and can be computed as follows:
\begin{align*}
\tilde{\zeta}_{\zc}\left(s\right) & =\zeta_{H}\left(s-1,a\right)\left[\psi\left(a+1\right)-\psi\left(1\right)\right]-a\sum_{l=0}^{\infty}\frac{\zeta_{H}\left(s-1,a+l+1\right)}{\left(a+l+1\right)\left(l+1\right)}.
\end{align*}
To simplify this, we first use \cite[(B.6)]{Milgram}:
\begin{align}
\sum_{l=0}^{k}\frac{\psi\left(b+l\right)}{\left(c+l\right)^{s}} & =\psi\left(b\right)\zeta_{H}\left(s,c\right)-\psi\left(b+k+1\right)\zeta_{H}\left(s,c+k+1\right)\label{eq:Milgram}\\
 & \qquad+\sum_{l=0}^{k}\frac{\zeta_{H}\left(s,c+l+1\right)}{b+l}.\nonumber 
\end{align}
Choosing $b=a+1$ and $c=a,$ and next $b=1$ and $c=a,$ and substituting
$s$ with $s-1$ yields 
\begin{align*}
\sum_{n=1}^{k+1}\frac{\psi\left(n+a\right)-\psi\left(n\right)}{\left(n+a-1\right)^{s-1}} & =\zeta_{H}\left(s-1,a\right)\left[\psi\left(a+1\right)-\psi\left(1\right)\right]\\
 & \qquad-\zeta_{H}\left(s-1,a+k+1\right)\left[\psi\left(a+k+2\right)-\psi\left(k+2\right)\right]\\
 & \qquad+\sum_{l=0}^{k}\zeta_{H}\left(s-1,a+l+1\right)\left[\frac{1}{a+l+1}-\frac{1}{l+1}\right].
\end{align*}
Taking the limit $k\to\infty$ gives the desired result.
\end{proof}

%
%

\begin{proof}[Proof of Theorem 4.2]
On setting $a=\frac{1}{2}$ in \eqref{linzeta}, the MZV simplifies to
\[
\tilde{\zeta}_{\zc}\left(s\right)=\sum_{n\ge1}\frac{\psi\left(n+\frac{1}{2}\right)-\psi\left(n\right)}{\left(n-\frac{1}{2}\right)^{s-1}}.
\]
Notice that
\[
\psi\left(n+\frac{1}{2}\right)-\psi\left(n\right)=2\sum_{j=0}^{\infty}\frac{\left(-1\right)^{j}}{2n+j}=\sum_{j=0}^{\infty}\frac{1}{n+j}-\sum_{j=0}^{\infty}\frac{1}{n+j+\frac{1}{2}},
\]
so that 
\[
\tilde{\zeta}_{\zc}\left(s\right)=\sum_{n=1}^\infty\sum_{j=0}^\infty\frac{1}{\left(n+j\right)\left(n-\frac{1}{2}\right)^{s-1}}-\sum_{n=1}^\infty\sum_{j=0}^\infty\frac{1}{\left(n+j+\frac{1}{2}\right)\left(n-\frac{1}{2}\right)^{s-1}}.
\]

The first part of the sum is computed as
\begin{align*}
\sum_{n=1}^\infty\frac{\psi\left(n+\frac{1}{2}\right)}{\left(n-\frac{1}{2}\right)^{s-1}} & =\sum_{n=1}^\infty\frac{1}{\left(n-\frac{1}{2}\right)^{s}}+\sum_{n=1}^\infty\frac{\psi\left(n-\frac{1}{2}\right)}{\left(n-\frac{1}{2}\right)^{s-1}}\\
 & =2^{s}t\left(s\right)+2^{s-1}\psi\left(\frac{1}{2}\right)+\sum_{n=2}^\infty\frac{1}{\left(n-\frac{1}{2}\right)^{s-1}}\left(\sum_{k=1}^{n-1}\frac{1}{k-\frac{1}{2}}+\psi\left(\frac{1}{2}\right)\right)\\
 & =2^{s}t\left(s\right)+2^{s}t\left(s-1,1\right)+2^{s-1}\psi\left(\frac{1}{2}\right)\\
&\qquad+\psi\left(\frac{1}{2}\right)\left(2^{s-1}t\left(s-1\right)-2^{s-1}\right)\\
 & =2^{s}t\left(s\right)+2^{s}t\left(s-1,1\right)+2^{s-1}t\left(s-1\right)\psi\left(\frac{1}{2}\right).
\end{align*}
The second term is computed using \cite[Theorem 4.10]{Milgram}, 
\[
\sum_{n=1}^\infty\frac{\psi\left(n\right)}{\left(n+q-1\right)^{s}}=\frac{\left(-1\right)^{s}}{\left(s-1\right)!}\left[\sum_{k=0}^{s-2}\binom{s-2}{k}\psi^{\left(k+1\right)}\left(q\right)\psi^{\left(s-k-2\right)}\left(q\right)-\frac{1}{2}\psi^{\left(s\right)}\left(q\right)\right].
\]
This gives Equation \eqref{eq:zetatildeG(s)}. For the specialization at odd integers, we use \cite[(C.28)]{Milgram} 
\begin{align*}
\sum_{n=1}^\infty\frac{\psi\left(n-\frac{1}{2}\right)}{\left(n-\frac{1}{2}\right)^{2s}} & =-\gamma\zeta\left(2s\right)\left(2^{2s}-1\right)-\frac{1}{2}\zeta\left(2s+1\right)\left(2^{2s+1}-1\right)\\
 & \qquad-\sum_{l=1}^{s-1}\left(2^{2l}-1\right)\zeta\left(2l\right)\zeta\left(2s+1-2l\right).
\end{align*}
Since moreover
\[
\psi^{\left(n\right)}\left(\frac{1}{2}\right)=\left(-1\right)^{n+1}n!\left(2^{n+1}-1\right)\zeta\left(n+1\right),
\]
we deduce
\[
\tilde{\zeta}_{\zc}\left(2s+1\right)=\sum_{n=1}^\infty\frac{\psi\left(n+\frac{1}{2}\right)-\psi\left(n\right)}{\left(n-\frac{1}{2}\right)^{2s}}=\sum_{n=1}^\infty\frac{\psi\left(n-\frac{1}{2}\right)-\psi\left(n\right)}{\left(n-\frac{1}{2}\right)^{2s}}+\sum_{n=1}^\infty\frac{1}{\left(n-\frac{1}{2}\right)^{2s+1}}.
\]
The last sum is 
\[
\sum_{n=1}^\infty\frac{1}{\left(n-\frac{1}{2}\right)^{2s+1}}=\left(2^{2s+1}-1\right)\zeta\left(2s+1\right),
\]
while the first sum is
\begin{align*}
\sum_{n=1}^\infty&\frac{\psi\left(n-\frac{1}{2}\right)-\psi\left(n\right)}{\left(n-\frac{1}{2}\right)^{2s}}  =-\gamma\zeta\left(2s\right)\left(2^{2s}-1\right)-\frac{1}{2}\zeta\left(2s+1\right)\left(2^{2s+1}-1\right)\\
 & \qquad-\sum_{l=1}^{s-1}\left(2^{2l}-1\right)\zeta\left(2l\right)\zeta\left(2s+1-2l\right)\\
 &\qquad -\frac{1}{\left(2s-1\right)!}\left[\sum_{k=0}^{2s-2}\binom{2s-2}{k}\psi^{\left(k+1\right)}\left(\frac{1}{2}\right)\psi^{\left(2s-k-2\right)}\left(\frac{1}{2}\right)-\frac{1}{2}\psi^{\left(2s\right)}\left(\frac{1}{2}\right)\right].
\end{align*}
We deduce
\begin{align*}
\tilde{\zeta}_{\zc}\left(2s+1\right) & =-\gamma\zeta\left(2s\right)\left(2^{2s}-1\right)+\left(s+\frac{1}{2}\right)\zeta\left(2s+1\right)\left(2^{2s+1}-1\right)\\
 & 
+\frac{1}{2s-1}\left[\sum_{k=0}^{2s-2}\left(k+1\right)\left(2^{k+2}-1\right)\zeta\left(k+2\right)\left(2^{2s-k-1}-1\right)\zeta\left(2s-k-1\right)\right]\\
& 
-\sum_{l=1}^{s-1}\left(2^{2l}-1\right)\zeta\left(2l\right)\zeta\left(2s+1-2l\right).
\end{align*}
\end{proof}

\begin{proof}[Proof of Theorem 4.3]
The computation of $\frac{1}{\tilde{z}_{k}}$ is straightforward. Moreover,
\[
\tilde{\zeta}_{\zc}\left(s\right)=\sum_{n=1}^\infty\left(\frac{3}{4n^{2}}-\psi^{'}\left(n+1\right)\right)\frac{1}{n^{2s-2}}=\frac{3}{4}\zeta\left(2s\right)-\sum_{n=1}^\infty\frac{\psi'\left(n+1\right)}{n^{2s-2}}.
\]
The sum is computed using \cite[(B.6)]{Milgram}: with
\begin{align*}
\sum_{l=0}^{k}\frac{\psi'\left(l+2\right)}{\left(l+1\right)^{2s-2}}&=\psi'\left(2\right)\zeta_{H}\left(2s-2,1\right)-\psi'\left(2+k+1\right)\zeta_{H}\left(2s-2,k+2\right) \\
&\qquad-\sum_{l=0}^{k}\frac{\zeta_{H}\left(2s-2,l+2\right)}{\left(l+2\right)^{2}},
\end{align*}
taking the limit $k\to\infty$ gives 
\[
\tilde{\zeta}_{\zc}\left(s\right)=\frac{3}{4}\zeta\left(2s\right)-\psi'\left(2\right)\zeta_{H}\left(2s-2,1\right)+\sum_{l=0}^{\infty}\frac{\zeta_{H}\left(2s-2,l+2\right)}{\left(l+2\right)^{2}}.
\]
The sum in the right-hand side is now computed as follows:
\begin{align*}
\sum_{l=0}^{\infty}\frac{\zeta_{H}\left(2s-2,l+2\right)}{\left(l+2\right)^{2}} & =\sum_{l=0}^\infty \sum_{n=0}^\infty\frac{1}{\left(l+2\right)^{2}\left(n+l+2\right)^{2s-2}}\\
&=\sum_{l = 1}^\infty\sum_{n=0}^\infty\frac{1}{\left(l+1\right)^{2}\left(n+l+1\right)^{2s-2}}\\
 & =\sum_{l=1}^{\infty}\frac{1}{\left(l+1\right)^{2s}}+\sum_{l=2}^{\infty}\sum_{n = 1}^\infty\frac{1}{l^{2}\left(n+l\right)^{2s-2}}.
\end{align*}
The first sum is $\zeta\left(2s\right)-1$
while the second sum is recognized as
\[
\zeta\left(2s-2,2\right) - \sum_{n=1}^{\infty}\frac{1}{1^2\left(1+n\right)^{2s-2}} = \zeta\left(2s-2,2\right) -\left( \zeta\left(2s-2\right)-1 \right).
\]
With
\[
\zeta_{H}\left(2s-2,1\right)=\zeta\left(2s-2\right),
\]
the final result is thus
\begin{align*}
\tilde{\zeta}_{\zc}\left(s\right) & =\frac{7}{4}\zeta\left(2s\right)-\zeta\left(2s-2\right)+\left(1-\frac{\pi^{2}}{6}\right)\zeta\left(2s-2\right)+\zeta\left(2s-2,2\right)\\
 & =\frac{7}{4}\zeta\left(2s\right)-\frac{\pi^{2}}{6}\zeta\left(2s-2\right)+\zeta\left(2s-2,2\right).
\end{align*}
\end{proof}

\begin{proof}[Proof of Theorem 4.4]
We compute 
\[
\frac{1}{\tilde{z}_{k}}=\sum_{i=1}^{k-1}\frac{1}{i\left(i+1\right)-k\left(k+1\right)}+\sum_{i=k+1}^{\infty}\left(\frac{1}{i\left(i+1\right)-k\left(k+1\right)}-\frac{1}{i\left(i+1\right)}\right).
\]
The first sum is
\begin{align*}
\sum_{i=1}^{k-1}\frac{1}{\left(i+\frac{1}{2}\right)^{2}-\left(k+\frac{1}{2}\right)^{2}} & =\sum_{i=1}^{k-1}\frac{1}{\left(i+k+1\right)\left(i-k\right)}\\&=\frac{1}{2k+1}\sum_{i=1}^{k-1}\left(\frac{1}{i-k}-\frac{1}{i+k+1}\right)\\
 & =\frac{1}{2k+1}\left(-\psi\left(k\right)+\psi\left(1\right)-\psi\left(2k+1\right)+\psi\left(k+2\right)\right).
\end{align*}
The second sum telescopes as
\begin{align*}
\sum_{i=k+1}^{\infty}\frac{1}{i\left(i+1\right)-k\left(k+1\right)}-&\left(\frac{1}{i}-\frac{1}{i+1}\right)  \\
&=\frac{1}{2k+1}\sum_{i=k+1}^{\infty}\left(\frac{1}{i-k}-\frac{1}{i+k+1}\right)-\frac{1}{k+1}\\
& =\frac{1}{2k+1}\left(\psi\left(2k+2\right)-\psi\left(1\right)\right)-\frac{1}{k+1},
\end{align*}
so that
\begin{align*}
\frac{1}{\tilde{z}_{k}} & =\frac{1}{2k+1}\left(-\psi\left(k\right)+\psi\left(2k+2\right)-\psi\left(2k+1\right)+\psi\left(k+2\right)\right)-\frac{1}{k+1}\\
 & =\frac{1}{2k+1}\left(\frac{1}{2k+1}+\frac{1}{k+1}+\frac{1}{k}\right)-\frac{1}{k+1}=\frac{1}{k}-\frac{2}{k+1}+\frac{1}{\left(2k+1\right)^{2}}.
\end{align*}
The complementary zeta function is computed as
\begin{align*}
\tilde{\zeta}_{\zc}\left(s\right) & =\sum_{n=1}^\infty\frac{1}{n^{s-1}\left(n+1\right)^{s-1}}\left(\frac{1}{n}-\frac{2}{n+1}+\frac{1}{\left(2n+1\right)^{2}}\right)\\
 & =\sum_{n=1}^\infty\frac{1}{n^{s}\left(n+1\right)^{s-1}}-2\sum_{n=1}^\infty\frac{1}{n^{s-1}\left(n+1\right)^{s}}+\sum_{n=1}^\infty\frac{1}{n^{s-1}\left(n+1\right)^{s-1}\left(2n+1\right)^{2}}.
\end{align*}
The first sum is computed using the partial fraction decomposition
\begin{align*}
\frac{1}{n^{s}\left(n+1\right)^{s-1}}&=\left(-1\right)^{s}\sum_{k=1}^{s}\left(-1\right)^{k}\binom{2s-2-k}{s-2}\frac{1}{n^{k}}+\left(-1\right)^{s}\sum_{k=1}^{s-1}\binom{2s-2-k}{s-1}\frac{1}{\left(n+1\right)^{k}} \\
&=\left(-1\right)^{s}\sum_{k=2}^{s}\left(-1\right)^{k}\binom{2s-2-k}{s-2}\frac{1}{n^{k}}+\left(-1\right)^{s}\sum_{k=2}^{s-1}\binom{2s-2-k}{s-1}\frac{1}{\left(n+1\right)^{k}} \\
&\qquad+ (-1)^s \binom{2s-3}{s-2} \left(  \frac{1}{n+1}-\frac{1}{n}  \right).
\end{align*}
We separated out the $k=1$ terms, and then used the symmetry $\binom{2s-3}{s-2}=\binom{2s-3}{s-1}$. Then
\begin{align*}
\sum_{n=1}^\infty\frac{1}{n^{s}\left(n+1\right)^{s-1}} & =\left(-1\right)^{s}
\left[
\sum_{k=2}^{s}\left(-1\right)^{k}\binom{2s-2-k}{s-2}\zeta\left(k\right)+\sum_{k=2}^{s-1}\binom{2s-2-k}{s-1}\left(\zeta\left(k\right)-1\right)  
\right] \\
&\qquad+ (-1)^s \binom{2s-3}{s-2}   \sum_{n=1}^\infty \left(  \frac{1}{n+1}-\frac{1}{n}  \right)\\
 & =\left(-1\right)^{s}\sum_{k=2}^{s}\left(-1\right)^{k}\binom{2s-2-k}{s-2}\zeta\left(k\right)+\left(-1\right)^{s}\sum_{k=2}^{s-1}\binom{2s-2-k}{s-1}\zeta\left(k\right)\\
 &\qquad -\left(-1\right)^{s}\sum_{k=1}^{s-1}\binom{2s-2-k}{s-1}\\
 & =\left(-1\right)^{s}\sum_{k=2}^{s}\left(\left(-1\right)^{k}\binom{2s-2-k}{s-2}+\binom{2s-2-k}{s-1}\right)\zeta\left(k\right) \\
&\qquad+\left(-1\right)^{s+1}\binom{2s-2}{s}.
\end{align*}
The second sum is also computed using the partial fraction decomposition
\begin{align*}
\frac{1}{n^{s-1}\left(n+1\right)^{s}}&=\left(-1\right)^{s+1}\sum_{k=1}^{s}\binom{2s-2-k}{s-2}\frac{1}{\left(n+1\right)^{k}}+\left(-1\right)^{s+1}\sum_{k=1}^{s-1}\left(-1\right)^{k}\binom{2s-2-k}{s-1}\frac{1}{n^{k}} \\
&=\left(-1\right)^{s+1}\sum_{k=2}^{s}\binom{2s-2-k}{s-2}\frac{1}{\left(n+1\right)^{k}}+\left(-1\right)^{s+1}\sum_{k=2}^{s-1}\left(-1\right)^{k}\binom{2s-2-k}{s-1}\frac{1}{n^{k}} \\
&\qquad+ (-1)^{s+1} \binom{2s-3}{s-2} \left(  \frac{1}{n+1}-\frac{1}{n}  \right).
\end{align*}
We deduce
\begin{align*}
\sum_{n=1}^\infty\frac{1}{n^{s-1}\left(n+1\right)^{s}} & =\left(-1\right)^{s+1}\sum_{k=2}^{s}\binom{2s-2-k}{s-2}\left(\zeta\left(k\right)-1\right)\\
&\qquad+\left(-1\right)^{s+1}\sum_{k=2}^{s-1}\left(-1\right)^{k}\binom{2s-2-k}{s-1}\zeta\left(k\right)\\
&\qquad+  (-1)^{s+1} \binom{2s-3}{s-2} \sum_{n=1}^\infty\left(  \frac{1}{n+1}-\frac{1}{n}  \right) \\
 & =\left(-1\right)^{s+1}\sum_{k=2}^{s}\left(\binom{2s-2-k}{s-2}+\left(-1\right)^{k}\binom{2s-2-k}{s-1}\right)\zeta\left(k\right)\\
&\qquad+\left(-1\right)^{s}\binom{2s-3}{s-2}.
\end{align*}
To compute the third sum, we use the partial fraction decomposition
\begin{align*}
\frac{1}{n^{s}\left(n+1\right)^{s}\left(2n+1\right)^{2}}&=\sum_{k=0}^{s}\frac{\alpha_{k}^{\left(s\right)}}{n^{k+1}}+\left(-1\right)^{s}\sum_{k=0}^{s}\frac{\beta_{k}^{\left(s\right)}}{\left(n+1\right)^{k+1}}+\frac{\left(-1\right)^{s}2^{2s}}{\left(2n+1\right)^{2}} \\
&=\sum_{k=1}^{s}\frac{\alpha_{k}^{\left(s\right)}}{n^{k+1}}+\left(-1\right)^{s}\sum_{k=1}^{s}\frac{\beta_{k}^{\left(s\right)}}{\left(n+1\right)^{k+1}}+\frac{\left(-1\right)^{s}2^{2s}}{\left(2n+1\right)^{2}} \\
&\qquad+ (-1)^{s-1}\beta_0^{(s)} \left( \frac{1}{n}-\frac{1}{n+1}\right),
\end{align*}
with residues
\[
\beta_{k}^{\left(s\right)}=\sum_{i=0}^{s-k-1}4^{i}\binom{2s-2i-k-2}{s-i-1},
\thinspace\thinspace
\alpha_{k}^{\left(s\right)}=\left(-1\right)^{s-k-1}\beta_{k}^{\left(s\right)}.
\]
We deduce 
\begin{align*}
\sum_{n=1}^\infty&\frac{1}{n^{s}\left(n+1\right)^{s}\left(2n+1\right)^{2}} \\
&=\sum_{k=1}^{s}\sum_{n=1}^\infty\frac{\alpha_{k}^{\left(s\right)}}{n^{k+1}}+\sum_{k=1}^{s}\sum_{n=1}^\infty\frac{\left(-1\right)^{s}\beta_{k}^{\left(s\right)}}{\left(n+1\right)^{k+1}}+\sum_{n=1}^\infty\frac{\left(-1\right)^{s}2^{2s}}{\left(2n+1\right)^{2}}\\
&\qquad+ (-1)^{s-1} \beta_0^{(s)}\sum_{n=1}^{\infty } \left( \frac{1}{n}-\frac{1}{n+1}\right) \\
 & =\left(-1\right)^{s}
\sum_{k=1}^{s}\left(\left(-1\right)^{k+1}\beta_{k}^{\left(s\right)}\zeta\left(k+1\right)+\beta_{k}^{\left(s\right)}\left(\zeta\left(k+1\right)-1\right)\right) \\
&\qquad+\left(-1\right)^{s}2^{2s}\sum_{n=1}^\infty\frac{1}{\left(2n+1\right)^{2}}-(-1)^{s}\beta_0^{(s)}\\
 & =\left(-1\right)^{s}\sum_{k=1}^{s}\left(1-\left(-1\right)^{k}\right)\beta_{k}^{\left(s\right)}\zeta\left(k+1\right)-\left(-1\right)^{s}\sum_{k=0}^{s}\beta_{k}^{\left(s\right)}
+\left(-1\right)^{s}2^{2s}\left(\frac{\pi^{2}}{8}-1\right).
\end{align*}

Since moreover it can be checked that
\[
\sum_{k=0}^{s}\beta_{k}^{\left(s\right)}=\frac{1}{2}\left(\frac{\left(2s+1\right)!}{s!s!}-2^{2s}\right),
\]
we deduce
\begin{align*}
\sum_{n=1}^\infty\frac{1}{n^{s}\left(n+1\right)^{s}\left(2n+1\right)^{2}} & =(-1)^{s}\sum_{k=1}^{s}\left(1-(-1)^{k}\right)\zeta(k+1)\beta_{k}^{\left(s\right)}\\
 & +(-1)^{s}\left(-\frac{(2s+1)!}{2(s!s!)}+\left(\frac{\pi^{2}}{8}-1\right)2^{2s}+2^{2s-1}\right).
\end{align*}
Putting the three terms together, we obtain
\begin{align*}
&\tilde{\zeta}_{\zc}\left(s\right)  =\sum_{n=1}^\infty\frac{1}{n^{s}\left(n+1\right)^{s-1}}-2\sum_{n=1}^\infty\frac{1}{n^{s-1}\left(n+1\right)^{s}}+\sum_{n=1}^\infty\frac{1}{n^{s-1}\left(n+1\right)^{s-1}\left(2n+1\right)^{2}}\\
 & =\left(-1\right)^{s}\sum_{k=2}^{s}\left(\left(-1\right)^{k}\binom{2s-2-k}{s-2}+\binom{2s-2-k}{s-1}\right)\zeta\left(k\right)+\left(-1\right)^{s+1}\binom{2s-2}{s}\\
&-2\left[\left(-1\right)^{s+1}\sum_{k=2}^{s}\left(\binom{2s-2-k}{s-2}+\left(-1\right)^{k}\binom{2s-2-k}{s-1}\right)\zeta\left(k\right)+\left(-1\right)^{s}2\binom{2s-3}{s-2}\right]\\
 & \qquad+(-1)^{s-1}\sum_{k=2}^{s}\left(1-(-1)^{k-1}\right)\zeta\left(k\right)\beta_{k-1}^{\left(s-1\right)} \\
&\qquad+(-1)^{s-1}\left(-\frac{\left(2s-1\right)!}{2\left(s-1\right)!\left(s-1\right)!}+\left(\frac{\pi^{2}}{8}-1\right)2^{2s-2}+2^{2s-3}\right),
\end{align*}
which can be simplified to
\begin{align*}
&\left(-1\right)^{s}\sum_{k=2}^{s}\zeta\left(k\right)\left(\left(-1\right)^{k}+2\right)\left( \binom{2s-2-k}{s-2}+\left(-1\right)^{k}\binom{2s-2-k}{s-1}\right)\\
&-\left(-1\right)^{s}\sum_{k=2}^{s}\zeta\left(k\right)\left(\left(1+\left(-1\right)^{k}\right)\beta_{k-1}^{\left(s-1\right)}\right)\\
&+\left(-1\right)^{s}\left[\frac{\left(2s-1\right)!}{2\left(s-1\right)!\left(s-1\right)!}-\binom{2s-2}{s}-4\binom{2s-3}{s-2}-\left(\frac{\pi^{2}}{8}-1\right)2^{2s-2}-2^{2s-3}\right],
\end{align*}
which completes the proof.
\end{proof}
\section{The Bessel zeta function}
\label{section:Bessel}
\subsection{The Bessel function case}

In this section, we study a non-elementary case of an extended zeta function based on the zeros of a transcendental function,
the Bessel function of the first kind  $J_{\nu}$ with parameter $\nu$, defined as
\[
J_{\nu}\left(z\right)=\frac{z^{\nu}}{2^{\nu}}\sum_{k=0}^\infty\frac{\left(-1\right)^{k}}{\Gamma\left(\nu+k+1\right)}\frac{\left(\frac{z^{2}}{4}\right)^{k}}{k!}.
\]
Its normalized version
\[
j_{\nu}\left(x\right)=2^{\nu}\Gamma\left(\nu+1\right)\frac{J_{\nu}\left(x\right)}{x^{\nu}}
\]
has Weierstrass factorization \cite[(8.544, Page 942)]{Table}
\[
j_{\nu}\left(z\right)=\prod_{k=1}^\infty\left(1-\frac{z^{2}}{x_{\nu,k}^{2}}\right),
\]
where $x_{\nu,k}$ are the real zeros of $j_{\nu}$, ordered by increasing
absolute values. We build from these zeros the Bessel zeta function
by choosing $z_{k}=x_{\nu,k}^{2},$ so that
\begin{equation}
\label{BesselZeta}
\zeta_{B,\nu}\left(2s\right):=\sum_{k=1}^\infty\frac{1}{z_{k}^{s}}=\sum_{k=1}^\infty\frac{1}{x_{\nu,k}^{2s}}.
\end{equation}
Note that this parameterized set of zeta functions 
includes several important special cases: when $\nu=\frac12$, we have $x_{\nu,k}=k\pi$, and we recover Riemann MZVs. When $\nu=-\frac12$, we have $x_{\nu,k}=\left(k+\frac12\right)\pi$, and we recover multiple $t$-values. A more in depth study of the Bessel zeta function, and the MZV type identities it satisfies, is given in \cite{Wakhare}. As shown in the following Lemma, it turns out that in the Bessel case, the complementary zeta function
can be explicitly computed as a linear combination of the Bessel zeta
function and of a depth $2$ Bessel MZV.
\begin{lemma}
The Bessel complementary zeta function is equal to
\begin{equation}
\tilde{\zeta}_{B,\nu}\left(s\right)=\frac{\nu+1}{2}\zeta_{B,\nu}\left(2s\right)-\zeta_{B,\nu}\left(2,2s-2\right).\label{eq:zetatildeBessel}
\end{equation}
\end{lemma}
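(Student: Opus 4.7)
The plan is to reduce the proof to computing the pole sum
\[
S_k := \sum_{i \neq k} \frac{1}{z_i - z_k} = \sum_{i \neq k} \frac{1}{x_{\nu,i}^2 - x_{\nu,k}^2}
\]
in the Bessel case $z_k = x_{\nu,k}^2$. Once I show that $S_k = (\nu+1)/(2 z_k)$, the definition \eqref{eq:1/ztilde 1} rewrites as
\[
\frac{1}{\tilde{z}_k} = S_k - \sum_{i>k} \frac{1}{z_i} = \frac{\nu+1}{2 z_k} - \sum_{i>k} \frac{1}{z_i},
\]
and substituting into $\tilde{\zeta}_{B,\nu}(s) = \sum_k 1/(\tilde{z}_k z_k^{s-1})$ immediately gives the two terms of \eqref{eq:zetatildeBessel}: the first piece contributes $\frac{\nu+1}{2}\zeta_{B,\nu}(2s)$, and the double sum $\sum_{k} z_k^{1-s} \sum_{i > k} z_i^{-1}$, after relabeling as a sum over $n_1 > n_2$, is precisely the extended Bessel depth-$2$ value $\zeta_{B,\nu}(2, 2s-2)$.

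To establish $S_k = (\nu+1)/(2 z_k)$, I would take the logarithmic derivative of the Weierstrass factorization of $j_\nu$ and isolate the $i = k$ summand, writing
\[
\frac{j_\nu'(z)}{j_\nu(z)} + \frac{2z}{x_{\nu,k}^2 - z^2} = -2z \sum_{i \neq k} \frac{1}{x_{\nu,i}^2 - z^2}.
\]
Both terms on the left carry a simple pole at $z = x_{\nu,k}$ that cancels, so the left-hand side extends continuously to $x_{\nu,k}$ where it equals $-2 x_{\nu,k} S_k$. The partial fraction $2z/(x_{\nu,k}^2 - z^2) = -1/(z - x_{\nu,k}) - 1/(z + x_{\nu,k})$ contributes a regular part $-1/(2 x_{\nu,k})$. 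For $j_\nu'/j_\nu$, a Taylor expansion about the simple zero $x_{\nu,k}$ shows that its regular part equals $j_\nu''(x_{\nu,k})/(2 j_\nu'(x_{\nu,k}))$; to identify this constant I would substitute $J_\nu(z) = z^\nu j_\nu(z)/(2^\nu \Gamma(\nu+1))$ into Bessel's equation to derive the reduced ODE
\[
j_\nu''(z) + \frac{2\nu+1}{z} j_\nu'(z) + j_\nu(z) = 0,
\]
which, evaluated at $x_{\nu,k}$, yields $j_\nu''(x_{\nu,k})/j_\nu'(x_{\nu,k}) = -(2\nu+1)/x_{\nu,k}$. Summing the two regular parts gives $-(\nu+1)/x_{\nu,k}$, and equating with $-2 x_{\nu,k} S_k$ produces the desired $S_k = (\nu+1)/(2 z_k)$.

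The main obstacle I expect is the careful bookkeeping of the cancellation at the double pole: one must track that the two singular contributions cancel exactly and that the residual constant carries the factor $\nu+1$ rather than $\nu$ or $2\nu+1$. This factor depends essentially on using the reduced Bessel ODE for $j_\nu$ (where the coefficient is $2\nu+1$) rather than the standard equation for $J_\nu$. Once that constant is pinned down, the partial-fraction identification and the double-sum reindexing are entirely routine.
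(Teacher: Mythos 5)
Your proof is correct and follows the same route as the paper's: everything reduces to the single identity $\sum_{i\neq k}\bigl(x_{\nu,i}^{2}-x_{\nu,k}^{2}\bigr)^{-1}=\frac{\nu+1}{2}x_{\nu,k}^{-2}$, after which splitting the definition of $1/\tilde{z}_k$ (legitimate, since both sums converge separately for $z_k=x_{\nu,k}^2\sim k^2\pi^2$) and reindexing the double sum give the two terms of \eqref{eq:zetatildeBessel}. The only difference is that the paper cites this identity from Calogero, whereas you re-derive it from the Weierstrass product and the reduced equation $j_\nu''+\frac{2\nu+1}{z}j_\nu'+j_\nu=0$; your bookkeeping of the pole cancellation and the regular parts $j_\nu''(x_{\nu,k})/(2j_\nu'(x_{\nu,k}))=-\frac{2\nu+1}{2x_{\nu,k}}$ and $-\frac{1}{2x_{\nu,k}}$ is correct and is essentially Calogero's own argument, so this is a sound, self-contained version of the same proof.
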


\begin{proof}
The complementary zeros $\tilde{z}_{k}$ defined by \eqref{eq:1/ztilde 1} are computed
as
\begin{align*}
\frac{1}{\tilde{z}_{k}} & =\sum_{i=1}^{k-1}\frac{1}{x_{\nu,i}^{2}-x_{\nu,k}^{2}}+\sum_{i=k+1}^{\infty}\frac{1}{x_{\nu,i}^{2}-x_{\nu,k}^{2}}-\frac{1}{x_{\nu,i}^{2}}\\
 & =\sum_{\substack{i=1 \\ i \ne k}}^{\infty}\frac{1}{x_{\nu,i}^{2}-x_{\nu,k}^{2}}-\sum_{i\ge k+1}\frac{1}{x_{\nu,i}^{2}}
\end{align*}
since both sums converge separately.
The first sum can be expressed using the formula  by Calogero \cite{Calogero}:
\[
\sum_{\substack{i=1 \\ i \ne k}}^{\infty}\frac{1}{x_{\nu,i}^{2}-x_{\nu,k}^{2}}=\frac{\nu+1}{2}\frac{1}{x_{\nu,k}^{2}}.
\]
The complementary zeta function is then obtained as
\begin{align*}
\tilde{\zeta}_{B,\nu}\left(s\right) & =\sum_{k=1}^\infty\frac{1}{\tilde{z}_{k}z_{k}^{s-1}}=\sum_{k=1}^\infty\frac{1}{z_{k}^{s-1}}\left(\frac{\nu+1}{2}\frac{1}{z_{k}}-\sum_{i\ge k+1}\frac{1}{z_{i}}\right)\\
 & =\frac{\nu+1}{2}\zeta_{B,\nu}\left(2s\right)-\zeta_{B,\nu}\left(2,2s-2\right).
\end{align*}
\end{proof}


This result allows us to express some non-elementary identities about
Bessel MZVs as follows.
\begin{theorem}\label{thm510}
The Bessel MZV \eqref{BesselZeta} satisfies the identity
\[
\zeta_{B,\nu}\left(2s+6\right)=\frac{2}{\nu+1}\sum_{\substack{a+b=s \\ a\ge-1,\,b \ge 0}}\zeta_{B,\nu}\left(4+2a,2+2b\right),\,\,s \ge 0.
\]
\end{theorem}

\begin{proof}
Apply the
generalization of Euler's identity \eqref{zetak+3} and (\ref{eq:zetatildeBessel})
and some elementary algebra to obtain the identity. 
\end{proof}

\begin{corollary}\label{cor53}
We have
\[
\zeta_{B,\nu}\left(6\right) = \frac{2}{\nu +3}
\zeta_{B,\nu}\left(2\right)\zeta_{B,\nu}\left(4\right).
\]
\end{corollary}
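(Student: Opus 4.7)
The plan is to specialize Theorem \ref{thm510} at $s=0$ and then eliminate the resulting depth-$2$ terms using the reflection formula \eqref{reflection}, leaving only an equation relating $\zeta_{B,\nu}(6)$ to the product $\zeta_{B,\nu}(2)\zeta_{B,\nu}(4)$.

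First, setting $s=0$ in Theorem \ref{thm510}, the admissible pairs $(a,b)$ with $a+b=0$, $a\ge -1$, $b\ge 0$ are exactly $(0,0)$ and $(-1,1)$, giving
\[
\zeta_{B,\nu}(6) \;=\; \frac{2}{\nu+1}\bigl[\zeta_{B,\nu}(4,2) + \zeta_{B,\nu}(2,4)\bigr].
\]
Next I would invoke the structural reflection formula \eqref{reflection}. Applied to the extended MZV with $z_k = x_{\nu,k}^2$, it reads $\z(s,t) + \z(t,s) + \z(s+t) = \z(s)\z(t)$, which in the Bessel indexing convention (where $\z(s) = \zeta_{B,\nu}(2s)$) translates, taking $s=2, t=1$, to
\[
\zeta_{B,\nu}(4,2) + \zeta_{B,\nu}(2,4) + \zeta_{B,\nu}(6) \;=\; \zeta_{B,\nu}(2)\,\zeta_{B,\nu}(4).
\]

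Substituting this into the first equation yields
\[
\zeta_{B,\nu}(6) \;=\; \frac{2}{\nu+1}\bigl[\zeta_{B,\nu}(2)\,\zeta_{B,\nu}(4) - \zeta_{B,\nu}(6)\bigr],
\]
and solving this linear equation for $\zeta_{B,\nu}(6)$ gives $(\nu+3)\,\zeta_{B,\nu}(6) = 2\,\zeta_{B,\nu}(2)\,\zeta_{B,\nu}(4)$, which is the claim. There is no real obstacle here since everything reduces to bookkeeping once Theorem \ref{thm510} is specialized correctly; the only subtlety is to be careful about the indexing conventions when passing between $\z$ and $\zeta_{B,\nu}$, so that the reflection formula is applied with the right arguments to produce exactly the symmetric combination $\zeta_{B,\nu}(4,2)+\zeta_{B,\nu}(2,4)$ appearing above.
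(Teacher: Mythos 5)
Your proof is correct and follows exactly the same route as the paper: specialize Theorem \ref{thm510} at $s=0$ to get $\zeta_{B,\nu}(6)=\frac{2}{\nu+1}\left[\zeta_{B,\nu}(2,4)+\zeta_{B,\nu}(4,2)\right]$, then use the reflection formula (which the paper invokes implicitly in the form $\zeta_{B,\nu}(2,4)+\zeta_{B,\nu}(4,2)=\zeta_{B,\nu}(2)\zeta_{B,\nu}(4)-\zeta_{B,\nu}(6)$) and solve the resulting linear equation. Your handling of the indexing convention $\z(s)=\zeta_{B,\nu}(2s)$ and of the admissible pairs $(a,b)$ is accurate.
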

\begin{proof}
Choosing $s=0$ in Theorem \ref{thm510} yields
\[
\zeta_{B,\nu}\left(6\right)=\frac{2}{\nu+1}\left(\zeta_{B,\nu}\left(2,4\right)+\zeta_{B,\nu}\left(4,2\right)\right).
\]
Since moreover
\[
\zeta_{B,\nu}\left(2,4\right)+\zeta_{B,\nu}\left(4,2\right) = \zeta_{B,\nu}\left(2\right)\zeta_{B,\nu}\left(4\right) - \zeta_{B,\nu}\left(6\right),
\]
the result follows after simple algebra.
\end{proof}
Notice that Corollary \ref{cor53} can be checked directly since the zeta values involved have respective explicit expressions
\[
\zeta_{B,\nu}\left( 2 \right)=\frac{1}{4\left( \nu+1 \right)},\,\,\zeta_{B,\nu}\left( 4 \right)=\frac{1}{16\left( 1+\nu \right)^2\left( 2+\nu \right)}
\]
and
\[
\zeta_{B,\nu}\left( 6 \right)=\frac{1}{32\left( 1+\nu \right)^3\left( 2+\nu \right)\left( 3+\nu \right)}.
\]
\subsection{The Bessel polynomial case}
\label{BesselPolynomial}
We conclude this series of examples with a case of a zeta function built on a {\it finite} sequence of numbers $\left\{ z_{k}\right\} $ chosen as 
the sequence of the zeros
$\left\{ z_{\nu,j}\right\}_{1 \le j \le n} $ of the Bessel polynomial $\theta_{n}\left(z\right)$ of degree $n=\nu-\frac{1}{2}:$
\[
\zeta_{\theta,n}  \left( s \right) = \sum_{j=1}^{n}
\frac{1}{z_{\nu,j}^{s}},
\] 
where we omit the dependence on $\nu$ in the notation as this parameter is fixed in what follows.

The Bessel polynomial $\theta_n$ is obtained from the modified Bessel function of the second kind $K_{\nu}$ with $\nu=n+\frac{1}{2}$,
as
\[
\theta_{n}\left(z\right)=\sqrt{\frac{2}{\pi}}e^{z}z^{n+\frac{1}{2}}K_{n+\frac{1}{2}}\left(z\right),
\]
or equivalently as 
\[
\theta_{n}\left(z\right)=\sum_{m=0}^{n}\frac{\left(n+m\right)!}{2^{m}\left(n-m\right)!m!}z^{n-m}.
\]
For example,
\[
\theta_{0}\left( z \right)=1,\,\,\theta_{1}\left( z \right)=1+z, \,\,\theta_{2}\left( z \right)= 3 +z+z^2.
\]
These roots are complex conjugated and satisfy the identity \cite[2.10b]{Ahmed}
\[
\sum_{\substack{k=1\\
k\ne j
}
}^{n}\frac{1}{z_{\nu,k}-z_{\nu,j}}=-1-\frac{\nu-\frac{1}{2}}{z_{\nu,j}}.
\]
We state the following theorem and omit its proof since it follows the same steps as the previous one. 
\begin{theorem}
In the case where $\left\{ z_{i}\right\} $ are chosen as the zeros of the Bessel polynomial of degree $n$, and with $\nu =n + \frac{1}{2},$ the sequence of complementary zeros is
\[
\frac{1}{\tilde{z}_{\nu,k}}=-1-\frac{\nu-\frac{1}{2}}{z_{\nu,j}}-\sum_{j= k+1}^n\frac{1}{z_{\nu,j}},
\]
and the complementary MZV is
\[
\tilde{\zeta}_{\theta,n}\left(s\right)=\left(\frac{1}{2}-\nu\right)\zeta_{\theta,n}\left(s\right)-\zeta_{\theta,n}\left(s-1\right)-\zeta_{\theta,n}\left(1,s-1\right).
\]
As a consequence,
\[
\zeta_{\theta,n}\left(s+3\right)=\frac{2}{1-2\nu}\left[\zeta_{\theta,n}\left(s+2\right)+\sum_{\substack{a\ge-1\\
a+b=s
}
}\zeta_{\theta,n}\left(2+a,1+b\right)\right],
\]
with, as the special case $s=0,$
\[
\zeta_{\theta,n}\left(2\right)+\zeta_{\theta,n}\left(1\right)\zeta_{\theta,n}\left(2\right)=\left(\frac{3}{2}-\nu\right)\zeta_{\theta,n}\left(3\right).
\]
\end{theorem}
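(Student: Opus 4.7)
The plan is to mimic the proof of the preceding Bessel Lemma, replacing Calogero's identity for zeros of the Bessel function with the Ahmed identity \cite[2.10b]{Ahmed} for zeros of the Bessel polynomial $\theta_n$. Since the index set is now finite (of size $n$), all rearrangements are harmless and no convergence issues arise.

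First, I would rewrite the definition \eqref{eq:1/ztilde 1} as
\[
\frac{1}{\tilde{z}_{\nu,k}} = \sum_{\substack{i=1 \\ i\neq k}}^{n}\frac{1}{z_{\nu,i}-z_{\nu,k}} - \sum_{i=k+1}^{n}\frac{1}{z_{\nu,i}},
\]
and then substitute the Ahmed identity $\sum_{i\neq k}\frac{1}{z_{\nu,i}-z_{\nu,k}} = -1 - \frac{\nu-1/2}{z_{\nu,k}}$ to obtain the stated formula for $1/\tilde{z}_{\nu,k}$ (with the obvious correction $z_{\nu,j}\to z_{\nu,k}$ in the first quotient). Next, insert this into $\tilde{\zeta}_{\zc}(s)=\sum_k 1/(\tilde{z}_{\nu,k} z_{\nu,k}^{s-1})$ and split into three pieces: the first two collapse to $-\z(s-1)$ and $-(\nu-\tfrac12)\z(s)$, while the third is a double sum $\sum_{i>k\ge 1}1/(z_{\nu,k}^{s-1}z_{\nu,i})$ which, after relabeling $n_1=i$ and $n_2=k$, is exactly $\z(1,s-1)$. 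This yields the closed form for $\tilde{\zeta}_{\zc}(s)$.

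For the consequence, apply the generalized sum identity \eqref{zetak+3} at level $s+3$, namely $\tilde{\zeta}_{\zc}(s+3)=\sum_{a+b=s,\,a,b\ge 0}\z(2+a,1+b)$, and equate with the expression $(\tfrac12-\nu)\z(s+3)-\z(s+2)-\z(1,s+2)$ coming from the closed form above. The stray term $\z(1,s+2)$ fits naturally as the $a=-1$, $b=s+1$ summand, so transferring it across extends the index range to $a\ge -1$. Solving for $\z(s+3)$ and noting that $1/(\tfrac12-\nu)=2/(1-2\nu)$ produces the claimed recurrence.

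Finally, for $s=0$ the extended sum contains only $\z(1,2)+\z(2,1)$, to which I would apply the reflection formula \eqref{reflection} $\z(1,2)+\z(2,1)+\z(3)=\z(1)\z(2)$, then collect the $\z(3)$ terms on one side; the coefficient $1+\tfrac{1-2\nu}{2}=\tfrac{3}{2}-\nu$ emerges directly. The main obstacle is purely bookkeeping—tracking signs when rewriting $1/\tilde{z}_{\nu,k}$ and verifying that the boundary case $a=-1$ correctly absorbs the lone $\z(1,\cdot)$ term—since once the Ahmed identity is invoked the argument reduces to finite linear algebra.
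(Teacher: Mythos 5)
Your proposal is correct and is exactly the argument the paper intends: the paper omits the proof, stating only that it ``follows the same steps as the previous one,'' i.e.\ the Bessel Lemma, with Calogero's identity replaced by Ahmed's identity \cite[2.10b]{Ahmed} — which is precisely what you do, including the correct identification of the $\z(1,s+2)$ term with the $a=-1$ summand and the use of the reflection formula for the $s=0$ case. You are also right that the $z_{\nu,j}$ in the middle term of the stated formula for $1/\tilde{z}_{\nu,k}$ should read $z_{\nu,k}$.
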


\section{Conclusion}
We have shown that several usual identities for MZVs are naturally of structural type, i.e. can be extended to MZVs built from an arbitrary sequence of nonzero numbers $\zc$. The price to pay is the use of the complementary sequence of numbers defined by \eqref{eq:1/ztilde 1} or \eqref{eq:1/ztilde 1-1} in the multivariate case. However, the natural appearance of the complementary zeta function through multiple methods suggests that it is somehow fundamental to the ring of quasisymmetric functions; it is an open question to characterize this in terms of known bases for the ring of quasisymmetric functions.

Additionally, exploring further special cases of $\tilde{z}_k$ appears to lead to many new sum relations. The case $z_k=k^{2n}$ for positive integral $n$ appears particularly promising, as it leads to nonlinear dependence relations among Riemann MZVs. Many of our results are also crying out for $q$-analogs; we believe that with the correct choice of $z_k$, we will have $\tilde{\zeta}_{\zc}$ reduce to some known $q$-multiple zeta function. Finally, we are still unsure of what the ``correct" higher order analog of $\tilde{\zeta}_\zc(s)$ is, since it appears to only be the structural analog of a depth $2$ MZV.

\section*{Acknowledgments}

The authors thank Karl Dilcher for his generous invitation to attend the Eighteenth International Conference on Fibonacci Numbers and Their Applications in Halifax, Canada, in July 2018.
This paper is dedicated to the memory of J.M. Borwein, the great mathemagician.

\end{document}